\spnewtheorem{algorithm}{Algorithm}{\bf}{\rm}
\begin{document}

\title{On the piecewise-concave approximations of functions
}


\author{Gene A. Bunin 
}


\institute{ \email{gene.a.bunin@ccapprox.info}           
}

\date{Submitted: \today}

\maketitle

\begin{abstract}
The piecewise-concave function may be used to approximate a wide range of other functions to arbitrary precision over a bounded set. In this short paper, this property is proven for three function classes: (a) the multivariate twice continuously differentiable function, (b) the univariate Lipschitz-continuous function, and (c) the multivariate separable Lipschitz-continuous function.
\keywords{piecewise-concave functions \and function approximation \and separable functions \and difference of convex functions}
\end{abstract}

Following Zangwill's definition \cite{Zangwill:67}, we define the piecewise-concave function, $p : \mathbb{R}^n \rightarrow \mathbb{R}$, as the pointwise maximum of $n_p$ concave functions $p_i$:

\vspace{-4mm}
\begin{equation}\label{eq:pwccv}
p(x) = \mathop {\max}_{i = 1,...,n_p} p_i(x),
\end{equation}

\noindent with $x \in \mathbb{R}^n$ the variable vector. While often arising directly in management science \cite{Zangwill:66,Zangwill:66b,Bhattacharjee:00,Chubanov:06} and location theory \cite{Giannessi:98} problems, the use of such functions as approximators of more general functions has been suggested more than once -- first by Zangwill himself \cite{Zangwill:67}, and then by Rozvany in the context of structural optimization \cite{Rozvany:70,Rozvany:71}. Recently, the piecewise-concave function has also been proposed as the link that allows the approximation of a  nonlinear programming problem by a reverse convex programming problem in nonconvex global optimization \cite{Bunin:ERCP}.

In the present paper, we examine the quality of the piecewise-concave approximation and prove that the approximation may be arbitrarily good for three general classes of functions over a bounded domain $\mathcal{X}$. These are:

\begin{enumerate}
\item the twice continuously differentiable ($\mathcal{C}^2$) function $f_c : \mathbb{R}^n \rightarrow \mathbb{R}$,
\item the Lipschitz-continuous univariate function $f_{u} : \mathbb{R} \rightarrow \mathbb{R}$,
\item the Lipschitz-continuous separable function $f_{s} : \mathbb{R}^n \rightarrow \mathbb{R}$. 
\end{enumerate}

\begin{theorem}[Piecewise-concave approximation of $\mathcal{C}^2$ functions] Let $f_c : \mathbb{R}^n \rightarrow \mathbb{R}$ be $\mathcal{C}^2$ over $\mathcal{X}$. It follows that there exists a piecewise-concave approximation $p$ such that

\vspace{-4mm}
\begin{equation}\label{eq:goodappC2}
\mathop {\max}_{x \in \mathcal{X}} | f_c(x) - p(x) | \leq \epsilon
\end{equation}

\noindent for any $\epsilon > 0$.
\end{theorem}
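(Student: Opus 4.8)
The plan is to realize $p$ as the pointwise maximum of finitely many concave quadratics anchored at the points of a fine net of $\mathcal{X}$. First I would pass to a bounded convex set $\mathcal{B} \supseteq \mathcal{X}$ (a bounding box) on which $f_c$ may be taken $\mathcal{C}^2$, and use compactness to fix a constant $M > 0$ bounding the Hessian there, in the sense that $|v^{T}\nabla^2 f_c(\xi)\,v| \le M\|v\|^2$ for all $\xi \in \mathcal{B}$ and all $v \in \mathbb{R}^n$. The Lagrange form of Taylor's theorem then gives, for all $x, y \in \mathcal{B}$,
\[
\bigl|\, f_c(x) - f_c(y) - \nabla f_c(y)^{T}(x-y) \,\bigr| \;\le\; \tfrac{M}{2}\,\|x-y\|^2 .
\]

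Next, for a parameter $\delta > 0$ to be chosen, I would take a finite $\delta$-net $\{x_1,\dots,x_N\} \subseteq \mathcal{X}$ — so that every $x \in \mathcal{X}$ satisfies $\|x - x_k\| \le \delta$ for some $k$; finiteness is guaranteed by boundedness of $\mathcal{X}$ — and set
\[
p_k(x) = f_c(x_k) + \nabla f_c(x_k)^{T}(x - x_k) - \tfrac{M}{2}\,\|x - x_k\|^2, \qquad p(x) = \max_{k=1,\dots,N} p_k(x).
\]
Each $p_k$ is an affine function minus a convex quadratic, hence concave, so $p$ is of the form \eqref{eq:pwccv} with $n_p = N$.

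It then remains to bound $|f_c - p|$ on $\mathcal{X}$. The Taylor inequality with $y = x_k$ gives $p_k(x) \le f_c(x) + \tfrac{M}{2}\|x-x_k\|^2 - \tfrac{M}{2}\|x-x_k\|^2 = f_c(x)$ for every $k$ and every $x \in \mathcal{X}$, hence $p(x) \le f_c(x)$. Conversely, given $x \in \mathcal{X}$, choose a net point $x_k$ with $\|x - x_k\| \le \delta$; the same inequality gives $p_k(x) \ge f_c(x) - \tfrac{M}{2}\delta^2 - \tfrac{M}{2}\delta^2$, so $p(x) \ge f_c(x) - M\delta^2$. Therefore $\max_{x\in\mathcal{X}}|f_c(x) - p(x)| \le M\delta^2$, and taking $\delta \le \sqrt{\epsilon/M}$ yields \eqref{eq:goodappC2} (if $M = 0$ then $f_c$ is affine and one may simply take $p = f_c$).

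The computation itself is routine; the only point requiring care is the domain bookkeeping, since the mean-value remainder in Taylor's theorem needs the segment $[x,x_k]$ to lie inside the region where the Hessian is controlled — this is exactly why I would enlarge $\mathcal{X}$ to a convex box at the outset, which is harmless because the pieces $p_k$ are defined on all of $\mathbb{R}^n$ and the claim only concerns values on $\mathcal{X}$. I expect no serious obstacle beyond this.
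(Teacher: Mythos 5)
Your argument is correct, and it reaches the conclusion by a genuinely different route than the paper. The paper invokes the D.C. decomposition $f_c(x) = \bigl(f_c(x) + \mu\|x\|_2^2\bigr) - \mu\|x\|_2^2$ (citing Horst and Tuy), approximates the convex part by a piecewise-linear maximum of affine functions, and then shifts every affine piece by the concave quadratic $-\mu\|x\|_2^2$; it thus leans on two imported facts, the existence of the decomposition and the uniform piecewise-linear approximability of a convex function. You bypass both: you build the concave pieces directly as $p_k(x) = f_c(x_k) + \nabla f_c(x_k)^T(x-x_k) - \tfrac{M}{2}\|x-x_k\|^2$ at the points of a $\delta$-net and control the error with the Lagrange form of Taylor's theorem, obtaining the explicit bound $M\delta^2$ and an explicit choice of $\delta$. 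The two constructions are in fact two views of the same object: if, in the paper's proof, the affine pieces are taken to be linearizations of $f_{cvx}$ at net points, then expanding $-\mu\|x\|_2^2 + f_{cvx}(x_k) + \nabla f_{cvx}(x_k)^T(x-x_k)$ gives exactly your $p_k$ with $\mu = M/2$, and your Hessian bound $M$ plays the role of the paper's ``sufficiently large'' $\mu$. What each buys: the paper's proof is shorter and modular, while yours is self-contained, constructive, and quantitative, which directly answers the paper's own remark that its result is ``largely conceptual'' from a computational standpoint. One shared caveat rather than a gap on your side: your bounding-box step tacitly assumes $f_c$ is (or extends to be) $\mathcal{C}^2$ on a compact convex set containing $\mathcal{X}$, but the paper's appeal to the D.C. decomposition over a bounded, not necessarily convex, $\mathcal{X}$ rests on exactly the same assumption, and you at least flag the issue explicitly.
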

\begin{proof} The proof follows from the D.C. (difference of convex) decomposition of $f_c$ over $\mathcal{X}$ \cite[Corollary 4.1]{Horst1995}:

\vspace{-4mm}
\begin{equation}\label{eq:DCdecomp}
\begin{array}{l}
f_c(x) = f_{cvx} (x) + f_{ccv} (x) \\
f_{cvx} (x) = f_c(x) + \mu \|x\|_2^2 \\
f_{ccv} (x) = -\mu \| x \|_2^2,
\end{array}
\end{equation}

\noindent where the convexity of $f_{cvx}$ is assured for $\mu > 0$ sufficiently large. Since $f_{cvx}$ is clearly $\mathcal{C}^2$ over $\mathcal{X}$ as well, it follows that it can be approximated by a piecewise-linear function

\vspace{-4mm}
\begin{equation}\label{eq:pwlin}
l(x) = \mathop {\max}_{i = 1,...,n_p} \left( a_i^T x + b_i  \right)
\end{equation}

\noindent such that

\vspace{-4mm}
\begin{equation}\label{eq:goodapplin}
\mathop {\max}_{x \in \mathcal{X}} | f_{cvx}(x) - l(x) | \leq \epsilon
\end{equation}

\noindent for any $\epsilon > 0$. Choosing

\vspace{-4mm}
\begin{equation}\label{eq:goodapplin2}
p(x) = f_{ccv}(x) + l(x) = \mathop {\max}_{i = 1,...,n_p} \left( f_{ccv}(x) + a_i^T x + b_i  \right)
\end{equation}

\noindent and reformulating (\ref{eq:goodapplin}) yields the desired result:

\vspace{-4mm}
\begin{equation}\label{eq:goodapplin3}
\begin{array}{l}
\displaystyle \mathop {\max}_{x \in \mathcal{X}} | f_{cvx}(x) + f_{ccv}(x) - f_{ccv}(x)  - l(x) |  = \vspace{1mm} \\
\displaystyle \hspace{35mm} \mathop {\max}_{x \in \mathcal{X}} | f_c (x) - p(x) | \leq \epsilon. \;\;\; \qed
\end{array}
\end{equation}

\end{proof}

From the point of view of actually computing the approximation, the above result is largely conceptual in nature since a D.C. decomposition may not be available for a given $f_c$, and one has to have a lower bound on the minimum eigenvalue of the Hessian of $f_c$ to know what value of $\mu$ is ``sufficiently large'' \cite{Adjiman:96}. In the case where a D.C. decomposition is available, obtaining the approximation simply becomes a matter of approximating $f_{cvx}$, for which very simple methods such as discretizing and taking linear approximations of $f_{cvx}$ at the discretization points could suffice.

\begin{theorem}[Approximation of Lipschitz-continuous univariate functions] Let $f_u : \mathbb{R} \rightarrow \mathbb{R}$ be Lipschitz- continuous over $\mathcal{X} = \{ x \in \mathbb{R} : \underline x \leq x \leq \overline x \}$:

\vspace{-4mm}
\begin{equation}\label{eq:lipcont}
| f_u (x_a) - f_u (x_b) | < \kappa | x_a - x_b |, \;\; \forall x_a, x_b \in \mathcal{X} \;\; (x_a \neq x_b),
\end{equation}

\noindent with $\kappa > 0$ denoting the Lipschitz constant. It follows that there exists a piecewise-concave approximation $p$ such that

\vspace{-4mm}
\begin{equation}\label{eq:goodappuni}
\mathop {\max}_{x \in \mathcal{X}} | f_u(x) - p(x) | \leq \epsilon
\end{equation}

\noindent for any $\epsilon > 0$.

\end{theorem}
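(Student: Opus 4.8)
The plan is to reduce the statement to one elementary fact: \emph{every} continuous piecewise-linear function on a bounded interval can be reproduced exactly by a pointwise maximum of concave functions, that is, it is already piecewise-concave in the sense of (\ref{eq:pwccv}). Granting this, the whole approximation error will be generated by a single standard step. \textbf{Step 1 (piecewise-linear interpolation).} Fix $\epsilon > 0$, choose breakpoints $\underline{x} = t_0 < t_1 < \dots < t_m = \overline{x}$ with $\max_j (t_j - t_{j-1}) \le \epsilon/\kappa$, and let $\ell$ be the continuous function that is affine, $\ell(x) = \alpha_j x + \beta_j$, on each $I_j := [t_{j-1}, t_j]$ and satisfies $\ell(t_j) = f_u(t_j)$ for all $j$. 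Writing $\ell(x)$ on $I_j$ as a convex combination of $f_u(t_{j-1})$ and $f_u(t_j)$ and invoking (\ref{eq:lipcont}) gives $\max_{x \in \mathcal{X}} |f_u(x) - \ell(x)| \le \epsilon$; moreover each slope $\alpha_j$ is a chord slope of $f_u$, hence $|\alpha_j| \le \kappa$, and $\ell$ is $\kappa$-Lipschitz on $\mathcal{X}$.

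\textbf{Step 2 (lifting each linear piece to a downward tent).} Write $\mathrm{dist}(x, I_j) := \min_{y \in I_j} |x - y|$ and let $L_j(x) = \alpha_j x + \beta_j$ be the affine extension of the $j$-th piece to all of $\mathbb{R}$. Define
\[
p_j(x) \;=\; L_j(x) - M\,\mathrm{dist}(x, I_j), \qquad M := 2\kappa .
\]
Since $\mathrm{dist}(\cdot, I_j)$ is convex, each $p_j$ is concave, and therefore $p := \max_{j = 1, \dots, m} p_j$ is piecewise-concave. I would then claim that $p \equiv \ell$ on $\mathcal{X}$, which at once yields $\max_{x \in \mathcal{X}} |f_u(x) - p(x)| = \max_{x \in \mathcal{X}} |f_u(x) - \ell(x)| \le \epsilon$.

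\textbf{Step 3 (checking $p \equiv \ell$ on $\mathcal{X}$).} Given $x \in \mathcal{X}$, pick $j$ with $x \in I_j$; then $\mathrm{dist}(x, I_j) = 0$, so $p_j(x) = L_j(x) = \ell(x)$ and hence $p(x) \ge \ell(x)$. For the reverse inequality, fix an arbitrary index $k$ and let $y \in I_k$ be the point of $I_k$ nearest to $x$, so that $\mathrm{dist}(x, I_k) = |x - y|$ and $L_k(y) = \ell(y)$. From $|\alpha_k| \le \kappa$ we obtain $L_k(x) \le \ell(y) + \kappa |x - y|$, and from the $\kappa$-Lipschitz continuity of $\ell$ we obtain $\ell(y) \le \ell(x) + \kappa |x - y|$; combining these,
\[
p_k(x) \;=\; L_k(x) - M|x - y| \;\le\; \ell(x) + (2\kappa - M)|x - y| \;=\; \ell(x) .
\]
As $k$ was arbitrary, $p(x) \le \ell(x)$, so $p(x) = \ell(x)$.

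The piecewise-linear interpolation of Step 1 is entirely routine; the real content sits in Step 3, specifically in the fact that a tent $p_k$ erected over one subinterval never rises above $\ell$ over any other subinterval. I expect this to be the only delicate point, and the device that keeps it clean is to pass through the nearest point $y \in I_k$ and then use that $\ell$ itself is $\kappa$-Lipschitz: this is what lets a single mesh-independent slope $M = 2\kappa$ serve all pieces at once. Comparing $L_k$ with $\ell$ more naively would still work but would force $M$ to grow like $1/\min_j(t_j - t_{j-1})$, which I would prefer to avoid.
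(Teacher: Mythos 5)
Your proof is correct, but it takes a genuinely different route from the paper's. The paper discretizes $[\underline x, \overline x]$ and builds one concave \emph{parabola} per subinterval, pinned to $f_u$ at the subinterval midpoint with derivatives $\pm 2\kappa$ at the subinterval endpoints (\ref{eq:pwquad3}); it then must argue which piece can attain the maximum where (Properties 2--4), bound the Lipschitz constant of $p$ by $4\kappa$, and combine sawtooth bounds to reach the error estimate $2.5\kappa\,\Delta x$. You instead interpolate $f_u$ piecewise-linearly and show that the interpolant $\ell$ is \emph{itself} piecewise-concave, by writing it exactly as $\max_k \bigl( L_k - 2\kappa\,\mathrm{dist}(\cdot,I_k)\bigr)$, so the only error incurred is the interpolation error. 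Your Step 3 is sound: passing through the nearest point $y \in I_k$ and invoking the $\kappa$-Lipschitz continuity of $\ell$ is what lets the mesh-independent buffer slope $M = 2\kappa$ serve every piece, and it plays precisely the role of the $\pm 2\kappa$ endpoint-derivative conditions in the paper's parabolas. As for what each approach buys: yours is shorter, gives a sharper bound (the interpolation error is in fact at most $\kappa \max_j (t_j - t_{j-1})/2$, versus the paper's $2.5\kappa\,\Delta x$, so a coarser mesh suffices for a given $\epsilon$), and isolates a reusable fact -- continuous piecewise-linear functions on an interval are piecewise-concave; the paper's construction, in exchange, produces smooth, strictly concave quadratic pieces with explicit closed-form coefficients (\ref{eq:piana}), which may be preferable when differentiable pieces are wanted downstream. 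Both are equally constructive, needing only an estimate of $\kappa$ and function values on a grid (midpoints for the paper, breakpoints for you).
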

\begin{proof} Let $p$ be defined by concave parabolas:

\vspace{-4mm}
\begin{equation}\label{eq:pchoice}
p(x) = \mathop {\max} \limits_{i = 1,...,n_p} \left( \beta_{2,i} x^2 + \beta_{1,i} x + \beta_{0,i} \right),
\end{equation}

\noindent where $\beta_{2} \in \mathbb{R}^{n_p}_{-}$ and $\beta_{1}, \beta_0 \in \mathbb{R}^{n_p}$, and consider the discretization given by $x_d = \{ \underline x, \underline x + \Delta x, ..., \overline x - \Delta x, \overline x \}$, with $\Delta x > 0$ dictating the precision. Let $n_p = (\overline x - \underline x)/\Delta x$ be the number of discretization subintervals, each of length $\Delta x$. 

We will enforce that each $p_i(x) = \beta_{2,i} x^2 + \beta_{1,i} x + \beta_{0,i}$ satisfy the following criteria:

\vspace{-4mm}
\begin{equation}\label{eq:pwquad3}
\begin{array}{l}
p_i(x_{d,i} + 0.5 \Delta x) = f_u(x_{d,i} + 0.5 \Delta x) \\
\displaystyle \frac {d p_i}{dx} \Big | _{x_{d,i}} = 2 \kappa \\
\displaystyle \frac {d p_i}{dx} \Big | _{x_{d,i+1}} = -2 \kappa,
\end{array}
\end{equation}

\noindent where $x_{d,i}$ denotes the $i^{\rm th}$ element of $x_d$. If written and solved as a linear system, (\ref{eq:pwquad3}) translates into the following:

\vspace{-4mm}
\begin{equation}\label{eq:pwquad4}
\begin{array}{l}
\left[ {\begin{array}{*{20}c}
   \beta_{2,i}  \\
   \beta_{1,i} \\
   \beta_{0,i}
\end{array}} \right] = 
\left[ {\begin{array}{*{20}c}
   (x_{d,i} + 0.5 \Delta x)^2 & x_{d,i} + 0.5 \Delta x & 1  \\
   2x_{d,i} & 1 & 0 \\
   2x_{d,i+1} & 1 & 0
\end{array}} \right] ^{-1} \vspace{1mm} \\ \hspace{45mm} 
\left[ {\begin{array}{*{20}c}
   f_u(x_{d,i} + 0.5 \Delta x)  \\
   2 \kappa \\
   -2 \kappa
\end{array}} \right].
\end{array}
\end{equation}

This solution exists and is unique as long as $\Delta x > 0$, with the resulting $p_i$ expressed analytically as

\vspace{-4mm}
\begin{equation}\label{eq:piana}
\begin{array}{ll}
 p_i (x) = & \displaystyle -\frac{2 \kappa}{\Delta x} x^2 - 2\kappa \left( 1-\frac{2 x_{d,i+1}}{\Delta x} \right) x - \frac{2 \kappa x_{d,i}^2}{\Delta x} \vspace{1mm} \\
& - 0.5\kappa \Delta x - 2\kappa x_{d,i} + f_u(x_{d,i} + 0.5 \Delta x).
\end{array}
\end{equation}

By enforcing the three conditions of (\ref{eq:pwquad3}), the following properties are guaranteed:

\begin{enumerate}
\item $p_i$ is quadratic and concave, with $\beta_{2,i} = -2 \kappa / \Delta x < 0$.
\item $p_i$ is a strict underestimator of $f_u$ at all points in $[\underline x, \overline x]$ that are outside the open interval $(x_{d,i}, x_{d,i+1})$. This may be proven as follows.

First, consider the function

\vspace{-4mm}
\begin{equation}\label{eq:lipsaw}
L_i (x) = f_u (x_{d,i} + 0.5 \Delta x) - \kappa | x - x_{d,i} - 0.5 \Delta x |,
\end{equation}

\noindent which is the Lipschitz ``sawtooth'' underestimator of $f_u$, generated around $x = x_{d,i} + 0.5 \Delta x$. It follows from the definition of the Lipschitz constant that

\vspace{-4mm}
\begin{equation}\label{eq:lipbound}
L_i (x) < f_u (x), \;\; \forall x \in [\underline x, \overline x] \setminus \{ x_{d,i} + 0.5 \Delta x \}.
\end{equation}

Given the construction of $p_i$, one sees that $L_i (x) = p_i(x)$ at $x = x_{d,i}, x_{d,i+1}$. Consider now the function 

\vspace{-4mm}
\begin{equation}\label{eq:pilin}
\overline p_i (x) = 2 \kappa x + f_u(x_{d,i} + 0.5 \Delta x) - 2\kappa x_{d,i} - 0.5 \kappa \Delta x,
\end{equation}

\noindent which is the linearization of $p_i$ at $x = x_{d,i}$. It is evident that $\overline p_i (x) \leq L_i (x), \; \forall x \in [\underline x, x_{d,i}]$, as both are linear and intersect at $x_{d,i}$, with $\overline p_i$ having a greater positive slope. From the concavity of $p_i$, it is also true that $p_i (x) \leq \overline p_i (x), \; \forall x$. It follows that

\vspace{-4mm}
\begin{equation}\label{eq:pwquad5}
p_i (x) \leq \overline p_i (x) \leq L_i (x) < f_u(x), \; \forall x \in [\underline x, x_{d,i}].
\end{equation}

A symmetrical analysis around $x_{d,i+1}$ yields a symmetrical result, and combining the two yields

\vspace{-4mm}
\begin{equation}\label{eq:pwquad6}
p_i (x) < f_u(x), \; \forall x \in [\underline x, x_{d,i}] \cup [x_{d,i+1}, \overline x ].
\end{equation}

\item $p_i$ approximates $f_u$ with zero error at $x = x_{d,i} + 0.5 \Delta x$. 
\item The interval for which $p_i(x) = p(x)$ is a strict subinterval of $[x_{d,i} - 0.5 \Delta x, x_{d,i+1} + 0.5\Delta x]$, i.e., $p_i$ can only be the ``piece'' of the piecewise-maximum function in the interior of this interval. This may be proven as follows.

Supposing first that $1 < i < n_p$, let $p_{i-1}$ denote the concave quadratic function for the neighboring interval $[x_{d,i-1},x_{d,i}]$, and consider the difference

\vspace{-4mm}
\begin{equation}\label{eq:pwquad7}
\begin{array}{l}
p_{i-1} (x) - p_i (x) = -4\kappa (x - x_{d,i}) \vspace{1mm} \\
\hspace{15mm} + f_u(x_{d,i} - 0.5 \Delta x) - f_u(x_{d,i} + 0.5 \Delta x).
\end{array}
\end{equation}

For $x = x_{d,i} - 0.5 \Delta x$, one may build on the result of (\ref{eq:pwquad6}), which states that $p_i (x_{d,i} - 0.5 \Delta x) < f_u(x_{d,i} - 0.5 \Delta x)$, and Property 3, which states that $p_{i-1} (x_{d,i} - 0.5 \Delta x) = f_u(x_{d,i} - 0.5 \Delta x)$, to obtain the following:

\vspace{-4mm}
\begin{equation}\label{eq:pwquad8}
\begin{array}{l}
-p_i(x_{d,i} - 0.5 \Delta x) > -f_u(x_{d,i} - 0.5 \Delta x) \\
p_{i-1} (x_{d,i} - 0.5 \Delta x) = f_u(x_{d,i} - 0.5 \Delta x) \\
\Rightarrow p_{i-1} (x_{d,i} - 0.5 \Delta x) - p_i (x_{d,i} - 0.5 \Delta x) > 0,
\end{array}
\end{equation}

which shows that the piece $p_{i-1}$ must be greater than $p_{i}$ at $x = x_{d,i} - 0.5 \Delta x$. From examining (\ref{eq:pwquad7}), it is clear that the derivative of this difference with respect to $x$ is negative, i.e., the difference increases with decreasing $x$. This implies that $p_{i-1} (x) - p_i (x) > 0$ remains true on the interval $x \in [\underline x, x_{d,i} - 0.5 \Delta x]$, and that $p_i$ cannot be the maximal piece on this interval. A symmetrical analysis shows that $p_{i+1}(x) - p_i(x) > 0$ for $x \in [x_{d,i+1} + 0.5 \Delta x, \overline x]$, i.e., that $p_i$ cannot be the maximal piece on this interval either. The overall result is thus summarized as

\vspace{-4mm}
\begin{equation}\label{eq:pwquad9}
\begin{array}{l}
p_i (x) < p (x), \vspace{1mm} \\
\hspace{10mm} \forall x \in [\underline x, x_{d,i} - 0.5 \Delta x] \cup [x_{d,i+1} + 0.5 \Delta x, \overline x]. 
\end{array}
\end{equation}

\noindent For the edge cases of $p_1$ and $p_{n_p}$, the same analysis applies but only one side has to be considered for each, since the other falls outside of $[\underline x, \overline x]$. In particular, the results obtained for the edge cases would be as follows:

\vspace{-4mm}
\begin{equation}\label{eq:pwquad9a}
\begin{array}{l}
p_1 (x) < p (x), \;\; \forall x \in [\underline x + 1.5 \Delta x, \overline x] \vspace{1mm} \\
p_{n_p} (x) < p (x), \;\; \forall x \in [\underline x, \overline x - 1.5 \Delta x].
\end{array}
\end{equation}

\end{enumerate}

Together, Properties 2 and 3 imply that $p(x) = f_u (x)$ at the midpoint of each discretization interval $[x_{d,i},x_{d,i+1}]$, with Property 3 establishing the zero-error approximation due to the piece $p_i$ and Property 2 establishing that every other piece must strictly underestimate the function at this point.

It now remains to consider the approximation error between the midpoints of the discretization intervals, for which the first step requires the identification of the Lipschitz constant of $p$. By Property 4, every piece $p_i$ is limited to the open interval $(x_{d,i} - 0.5 \Delta x, x_{d,i+1} + 0.5 \Delta x)$, from which it follows that the Lipschitz constant of $p$ cannot exceed the Lipschitz constant of one of these pieces over the relevant interval:

\vspace{-4mm}
\begin{equation}\label{eq:lippi}
\begin{array}{l}
\displaystyle \mathop {\sup} \limits_{x \in \left( {\footnotesize \begin{array}{l}  x_{d,i} - 0.5 \Delta x, \\ x_{d,i+1} + 0.5 \Delta x  \end{array} } \right) } \Bigg | \frac{dp_i}{dx} \Big |_x  \Bigg | \\
\displaystyle \hspace{5mm} = \mathop {\sup} \limits_{x \in \left( {\footnotesize \begin{array}{l}  x_{d,i} - 0.5 \Delta x, \\ x_{d,i+1} + 0.5 \Delta x  \end{array} } \right) } \Big | \frac{4\kappa}{\Delta x} (x_{d,i} - x) - 2\kappa \Big | = 4 \kappa.
\end{array}
\end{equation}

\noindent This allows for the approximation error to be bounded with respect to any discretization interval midpoint $x_{d,i} + 0.5 \Delta x$ by considering the Lipschitz sawtooth bounds for both $f_u$ and $p$:

\vspace{-4mm}
\begin{equation}\label{eq:pwquad10}
\begin{array}{l}
f_u(x_{d,i} + 0.5 \Delta x) - \kappa | x - x_{d,i} - 0.5 \Delta x |  \vspace{1mm} \\
\hspace{5mm}\leq f_u(x) \leq f_u(x_{d,i} + 0.5 \Delta x) + \kappa | x - x_{d,i} - 0.5 \Delta x |, \vspace{3mm} \\
p(x_{d,i} + 0.5 \Delta x) - 4\kappa | x - x_{d,i} - 0.5 \Delta x |  \vspace{1mm} \\
\hspace{5mm}\leq p(x) \leq p(x_{d,i} + 0.5 \Delta x) + 4\kappa | x - x_{d,i} - 0.5 \Delta x |,
\end{array}
\end{equation}

\noindent $\forall x \in [\underline x, \overline x]$. Negating the latter:

\vspace{-4mm}
\begin{equation}\label{eq:pwquad10a}
\begin{array}{l}
-p(x_{d,i} + 0.5 \Delta x) - 4\kappa | x - x_{d,i} - 0.5 \Delta x | \vspace{1mm}\\
\hspace{5mm} \leq -p(x) \leq -p(x_{d,i} + 0.5 \Delta x) + 4\kappa | x - x_{d,i} - \Delta x | \\
\end{array}
\end{equation}

\noindent and adding it to the former, while noting that $f_u(x_{d,i} + 0.5 \Delta x) = p(x_{d,i} + 0.5 \Delta x)$, yields

\vspace{-4mm}
\begin{equation}\label{eq:pwquad10b}
\begin{array}{l}
- 5\kappa | x - x_{d,i} - 0.5 \Delta x | \\
\hspace{10mm} \leq f_u(x) - p(x) \leq 5\kappa | x - x_{d,i} - 0.5 \Delta x |, 
\end{array}
\end{equation}

\noindent which is equivalent to

\vspace{-4mm}
\begin{equation}\label{eq:pwquad10c}
| f_u(x) - p(x) | \leq 5\kappa | x - x_{d,i} - 0.5 \Delta x |, \;\; \forall x \in [\underline x, \overline x]. 
\end{equation}

Without loss of generality, we may suppose $x$ to lie between the discretization points $x_{d,i}$ and $x_{d,i+1}$, i.e., that

\vspace{-4mm}
\begin{equation}\label{eq:midpoint}
x = \theta x_{d,i} + (1-\theta)x_{d,i+1}, \; \theta \in [0,1].
\end{equation}

\noindent Since $x_{d,i+1} = x_{d,i} + \Delta x$, this may be rewritten as

\vspace{-4mm}
\begin{equation}\label{eq:midpoint}
x = \theta x_{d,i} + (1-\theta)(x_{d,i} + \Delta x) = x_{d,i} + \Delta x - \theta \Delta x,
\end{equation}

\noindent and substituted into (\ref{eq:pwquad10c}) to obtain

\vspace{-4mm}
\begin{equation}\label{eq:pwquad10d}
| f_u(x) - p(x) | \leq 5\kappa \Delta x | 0.5 - \theta |, \;\; \forall x \in [\underline x, \overline x]. 
\end{equation}

Given that $\theta$ must lie in the unit interval, the worst-case upper bound that is independent of $\theta$ clearly corresponds to the cases where $\theta$ is either 0 or 1, and as such

\vspace{-4mm}
\begin{equation}\label{eq:pwquad10e}
| f_u(x) - p(x) | \leq 2.5\kappa \Delta x, \;\; \forall x \in [\underline x, \overline x]. 
\end{equation}

For a given $\epsilon$, it then suffices to choose $\Delta x = \frac{\epsilon}{2.5 \kappa}$ to obtain the desired result. \qed
\end{proof}

In this case, we note that the proof provides us with a simple method to construct a piecewise-concave approximation to arbitrary precision, provided that a proper estimate of the Lipschitz constant $\kappa$ is available. For a univariate function on a bounded interval, it is expected that obtaining such an estimate should not be very difficult for most problems.

The approximation result for a Lipschitz-continuous separable function follows as a corollary to Theorem 2.

\begin{corollary}[Approximation of a Lipschitz-continuous separable function] Let $f_s : \mathbb{R}^n \rightarrow \mathbb{R}$ be Lipschitz- continuous and separable over $\mathcal{X}$:

\vspace{-4mm}
\begin{equation}\label{eq:sep}
f_s (x) = \sum_{j=1}^{n} f_{u,j} (x_j),
\end{equation}

\noindent with $f_{u,j} : \mathbb{R} \rightarrow \mathbb{R}$ denoting its univariate components. It follows that there exists a piecewise-concave approximation $p$ such that:

\vspace{-4mm}
\begin{equation}\label{eq:goodappsep}
\mathop {\max}_{x \in \mathcal{X}} | f_s(x) - p(x) | \leq \epsilon
\end{equation}

\noindent for any $\epsilon > 0$.

\end{corollary}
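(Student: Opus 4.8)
The plan is to reduce the multivariate separable case to $n$ applications of Theorem~2, one per coordinate, and then to observe that a sum of univariate piecewise-concave functions in disjoint variables is again piecewise-concave. Concretely, since $\mathcal{X}$ is bounded it may be enclosed in a box $\prod_{j=1}^{n}[\underline{x}_j,\overline{x}_j]$, and the Lipschitz continuity of $f_s$ transfers to each component: for two points differing only in coordinate $j$ one has $|f_{u,j}(x_{a,j})-f_{u,j}(x_{b,j})| = |f_s(x_a)-f_s(x_b)| \le \kappa\,|x_{a,j}-x_{b,j}|$, so each $f_{u,j}$ is Lipschitz continuous on $[\underline{x}_j,\overline{x}_j]$ with constant $\kappa_j \le \kappa$. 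Applying Theorem~2 to each $f_{u,j}$ with tolerance $\epsilon/n$ then produces a univariate piecewise-concave approximation
\begin{equation*}
p_j(x_j) = \max_{i = 1,\ldots,n_{p,j}} p_{j,i}(x_j), \qquad \max_{x_j \in [\underline{x}_j,\overline{x}_j]} |f_{u,j}(x_j) - p_j(x_j)| \le \frac{\epsilon}{n},
\end{equation*}
in which every $p_{j,i}$ is a concave parabola.

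The central step is to set $p(x) = \sum_{j=1}^{n} p_j(x_j)$ and to check that this $p$ is piecewise-concave in the sense of (\ref{eq:pwccv}). Distributing the sum over the maxima gives
\begin{equation*}
p(x) = \sum_{j=1}^{n} \max_{i_j = 1,\ldots,n_{p,j}} p_{j,i_j}(x_j) = \max_{(i_1,\ldots,i_n)} \ \sum_{j=1}^{n} p_{j,i_j}(x_j),
\end{equation*}
where the outer maximum ranges over the finite product index set $\prod_{j=1}^{n}\{1,\ldots,n_{p,j}\}$. For each fixed multi-index $(i_1,\ldots,i_n)$ the map $x \mapsto \sum_{j=1}^{n} p_{j,i_j}(x_j)$ is a sum of concave functions of pairwise-disjoint variables, hence concave on $\mathbb{R}^n$; therefore $p$ is a pointwise maximum of finitely many concave functions, i.e., a piecewise-concave function with $n_p = \prod_{j=1}^{n} n_{p,j}$ pieces.

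The error bound is then immediate from the triangle inequality: for every $x \in \mathcal{X}$,
\begin{equation*}
|f_s(x) - p(x)| = \Bigg| \sum_{j=1}^{n} \big( f_{u,j}(x_j) - p_j(x_j) \big) \Bigg| \le \sum_{j=1}^{n} |f_{u,j}(x_j) - p_j(x_j)| \le n \cdot \frac{\epsilon}{n} = \epsilon,
\end{equation*}
which is the claimed result. The only genuinely non-routine ingredient is the "distribute the maximum over the separable sum" identity together with the remark that each resulting piece is concave because the concave summands depend on disjoint variables; the multiplicative blow-up in the number of pieces is harmless, since definition (\ref{eq:pwccv}) only requires finitely many of them. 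The remaining points — enclosing $\mathcal{X}$ in a box and carrying the Lipschitz property down to the components — are routine, and I would dispatch them in a sentence.
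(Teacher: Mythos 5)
Your proposal is correct and follows essentially the same route as the paper: project onto each coordinate, apply Theorem~2 with tolerance $\epsilon/n$, sum the univariate approximations, and bound the total error by summing the componentwise errors. The only difference is that where the paper cites Zangwill for the fact that a sum of piecewise-concave functions is piecewise-concave, you verify it directly by distributing the maximum over the separable sum and noting that each of the $\prod_j n_{p,j}$ resulting pieces is concave as a sum of concave functions in disjoint variables --- a harmless and correct elaboration of the same argument.
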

\begin{proof} The Lipschitz continuity of $f_s$ implies the Lipschitz continuity of its univariate components $f_{u,j}$. Likewise, the boundedness of $\mathcal{X}$ implies that the individual variables $x_j$ may be bounded by some finite $\underline x_j, \overline x_j$ so that $\underline x_j \leq x_j \leq \overline x_j, \; \forall j = 1,...,n$. It then follows from Theorem 2 that for each $j$ there exists a piecewise-concave approximation $p_j : \mathbb{R} \rightarrow \mathbb{R}$ such that

\vspace{-4mm}
\begin{equation}\label{eq:goodappunij}
\mathop {\max}_{x_j \in [\underline x_j, \overline x_j ]} | f_{u,j}(x_j) - p_j(x_j) | \leq \epsilon_j
\end{equation}

\noindent for any $\epsilon_j > 0$.

An equivalent statement to (\ref{eq:goodappunij}) is that

\vspace{-4mm}
\begin{equation}\label{eq:goodappunij2}
-\epsilon_j \leq f_{u,j}(x_j) - p_j(x_j)  \leq \epsilon_j, \;\; \forall x_j \in [\underline x_j, \overline x_j],
\end{equation}

\noindent which, if summed over $j = 1,...,n$, yields

\vspace{-4mm}
\begin{equation}\label{eq:goodappunij3}
- \sum_{j=1}^{n} \epsilon_j \leq \sum_{j=1}^n f_{u,j}(x_j) - \sum_{j=1}^n p_j(x_j)  \leq \sum_{j=1}^n \epsilon_j, \;\; \forall x \in \mathcal{X},
\end{equation}

\noindent or

\vspace{-4mm}
\begin{equation}\label{eq:goodappunij4}
- \sum_{j=1}^{n} \epsilon_j \leq f_{s}(x) - \sum_{j=1}^n p_j(x_j)  \leq \sum_{j=1}^n \epsilon_j, \;\; \forall x \in \mathcal{X}.
\end{equation}

Let us choose

\vspace{-4mm}
\begin{equation}\label{eq:pwsum}
p(x) = \sum_{j=1}^n p_j(x_j),
\end{equation}

\noindent which must be piecewise-concave since the sum of continuous piecewise-concave functions must also be continuous piecewise-concave \cite{Zangwill:67}. Substituting (\ref{eq:pwsum}) into (\ref{eq:goodappunij4}) and returning to the equivalent worst-case formulation yields:

\vspace{-4mm}
\begin{equation}\label{eq:goodappunij5}
\mathop {\max}_{x \in \mathcal{X}} | f_{s}(x) - p(x) | \leq \sum_{j=1}^n \epsilon_j,
\end{equation}

\noindent where choosing, as one example, $\epsilon_j = \epsilon/n$ yields the desired result. \qed

\end{proof}

\bibliographystyle{spmpsci}      
\bibliography{pwccv}
%
%



\end{document}